\newcommand{\Q}{\mathbb Q}
\newcommand{\OO}{\mathfrak O}
\newcommand{\Z}{\mathbb Z}
\newcommand{\F}{\mathbb F}
\newcommand{\p}{\mathfrak p}
\newcommand{\m}{\mathfrak m}
\newcommand{\peu}{{\it peu ramifi\'ee}\ }
\DeclareMathOperator{\gal}{Gal}
\DeclareMathOperator{\gl}{GL}
\DeclareMathOperator{\pgl}{PGL}
\DeclareMathOperator{\tr}{Tr}
\newcommand{\HH}{\mathcal H}
\DeclareMathOperator{\frob}{Fr}
\newtheorem{theorem}{\sc Theorem}[section]
\newtheorem{definition}[theorem]{\sc Definition}
\theoremstyle{remark}
\newcommand{\be}{\begin{equation}}
\newcommand{\ee}{\end{equation}}
\begin{document}

\title{Finding Galois representations corresponding to certain Hecke eigenclasses \\
International Journal of Number Theory \\
Volume 5, Number 1, February 2009}
\author[M.~DeWitt]{Meghan DeWitt}
\address{Meghan De Witt\\St. Thomas Aquinas College \\ Sparkill, NY 10976\\ USA.}
\email{mdewitt@stac.edu}
\author[D.~Doud]{Darrin Doud}
\address{Darrin Doud\\ Brigham Young University\\ Provo, UT  84602\\ USA.}
\email{doud@math.byu.edu}
\keywords{Galois representation; Hecke eigenclass; modularity conjectures.}
\date{Feb 2009}
\begin{abstract}
In 1992, Avner Ash and Mark McConnell presented computational
evidence of a connection between three-dimensional Galois
representations and certain arithmetic cohomology classes.  For some
examples they were unable to determine the attached representation.
For several Hecke eigenclasses (including one for which Ash and McConnell did not find the Galois representation), we find a Galois
representation which appears to be attached and show strong evidence for the uniqueness of this
representation. The techniques that we use to find defining polynomials for the Galois representations include a targeted Hunter search, class field theory, and elliptic curves.

\end{abstract}
\maketitle


\baselineskip 24pt
\section{Introduction}
 In \cite{AM}, Avner Ash and Mark McConnell provided computational evidence of a connection between three-dimensional Galois representations and certain arithmetic cohomology classes.  Their evidence was obtained by computing simultaneous eigenclasses of Hecke operators acting on certain arithmetic cohomology groups, and then attempting to compute Galois representations attached to these eigenclasses.  For many of the eigenclasses that they obtained, they were able to compute a Galois representation which seemed to be attached, but for some of their examples they were unable to determine such a representation.  In this paper, we examine one of these examples and find the Galois representation.  The Hecke eigenvalues arising from the cohomology computation allow us to predict the orders of Frobenius elements under the Galois representation, and we will find a Galois representation with the correct orders of Frobenius to correspond to the eigenclass for all primes $\ell$ less than 50. We will show that under certain assumptions on the Galois representations, the representations that we find are unique, and explain why these assumptions are reasonable.

\section{Attached eigenvectors}

 Let $\Gamma_0(N)$ be the set of matrices in $\gl_3(\Z)$ whose first row is congruent to $(*,0,0)$ modulo $N$, and let $\Sigma_N$ be the subsemigroup of integral matrices in $\gl_3(\Q)$ satisfying the same congruence condition.  For a fixed prime $p$, let $\HH(N)$ be the $\bar\F_p$-algebra of double cosets $\Gamma_0(N)\backslash \Sigma_N/\Gamma_0(N)$.  Then for any $\F_p[\Sigma_N]$-module $V$ and any $q\geq 0$, $H^q(\Gamma_0(N),V)$ is an $\HH(N)$-module.  We call the elements of $\HH(N)$ Hecke operators, and single out the elements corresponding to double cosets of the form $\Gamma_0(N)D(\ell,k)\Gamma_0(N)$, where $\ell | N$ is prime, $0\leq k\leq 3$, and $D(\ell,k)$ is a $3\times3$ diagonal matrix with the first $3-k$ entries equal to 1 and the remaining $k$ entries equal to $\ell$.  We write $T(\ell,k)$ for  the double coset corresponding to $D(\ell,k)$.
\begin{definition} Let $V$ be a $\HH(pN)$ module, and suppose that $v\in V$ is a simultaneous eigenvector of all $T(\ell,k)$ for all primes $\ell | pN$ and $0\leq k\leq 3$, so that $T(\ell,k)v=a(\ell,k)v$ for some $a(\ell,k)\in\bar\F_p$.  If $$\rho:G_\Q\to\gl_3(\bar\F_p)$$ is a Galois representation unramified outside $pN$, such that
$$\sum_{k=0}^3(-1)^k\ell^{k(k-1)/2}a(\ell,k)X^k=\det(I-\rho(\frob_\ell)X)$$
for all $\ell\neq pN$, then we say that $\rho$ is attached to $v$.
\end{definition}

Note that in this paper, the $\HH(pN)$-module $V$ will be taken to be $H^3(\Gamma_0(N),\F_p)$.  Also note that the definition of attached merely indicates a coincidence of the Hecke polynomial at $\ell$ with the characteristic polynomial of $\rho(\frob_\ell)$ for all $\ell$.  Given an eigenvector $v$, the problem of finding an attached $\rho$ (if one exists) can be very difficult.  In this paper, we will content ourselves with the following: given $v$, we will compute the Hecke polynomial.  Under the assumption that this is the characteristic polynomial of a $3\times 3$ matrix, we will predict the order of that matrix.  We will than find a Galois representation $\rho:G_\Q\to\gl_3(\bar\F_p)$ such that for each $\ell<50$, the order of $\rho(\frob_\ell)$ matches this prediction. Note that for any set of eigenvalues appearing with trivial coefficients, $a_i(\ell,0)=a_i(\ell,3)=1$.

\section{Systems of Hecke eigenvalues}

We begin by describing the Hecke eigenclasses in which we are interested.  In \cite{AM}, Ash and McConnell remark that there is a quasicuspidal eigenclass in $H^3(\Gamma_0(163),\F_5)$ and indicate that it has eigenvalues defined over $\F_5$.  Using software developed for \cite{ADP}, we have computed the space $H^3(\Gamma_0(163),\F_5)$, and found that it has six one-dimensional eigenspaces with eigenvalues defined over $\F_5$.  We list the eigenvalues for these eigenclasses, which we denote by $\alpha_1,\ldots,\alpha_6$ in Table~\ref{eigenvalues}.  Each of these sets of eigenvalues should have a three-dimensional Galois representation attached.

\begin{table}
\begin{tabular}{|c|c|c|c|c|c|c|c|c|c|c|c|c|c|c|c|}
\hline
$\ell$&2&3&5&7&11&13&17&19&23&29&31&37&41&43&47\cr
\hline
$a_1(\ell,1)$&4&4&$*$&1&0&3&4&0&0&2&0&1&4&1&0\cr
$a_1(\ell,2)$&1&1&$*$&0&0&3&1&2&4&0&0&0&4&2&3\cr
\hline
$a_2(\ell,1)$&1&1&$*$&0&0&3&1&2&4&0&0&0&4&2&3\cr
$a_2(\ell,2)$&4&4&$*$&1&0&3&4&0&0&2&0&1&4&1&0\cr
\hline
$a_3(\ell,1)$&2&2&$*$&1&2&3&4&4&3&4&2&0&2&1&0\cr
$a_3(\ell,2)$&2&0&$*$&0&2&3&1&3&3&3&2&3&2&2&3\cr
\hline
$a_4(\ell,1)$&2&0&$*$&0&2&3&1&3&3&3&2&3&2&2&3\cr
$a_4(\ell,2)$&2&2&$*$&1&2&3&4&4&3&4&2&0&2&1&0\cr
\hline
$a_5(\ell,1)$&3&2&$*$&2&0&3&2&0&2&2&1&0&2&3&0\cr
$a_5(\ell,2)$&4&0&$*$&2&0&3&2&2&0&0&1&3&2&3&3\cr
\hline
$a_6(\ell,1)$&4&0&$*$&2&0&3&2&2&0&0&1&3&2&3&3\cr
$a_6(\ell,2)$&3&2&$*$&2&0&3&2&0&2&2&1&0&2&3&0\cr
\hline
\end{tabular}
\caption{Six sets of eigenvalues in $H^3(\Gamma_0(163),\F_5)$.}\label{eigenvalues}
\end{table}

Ash and McConnell indicate that the representation potentially associated to their cohomology eigenclass seems to be reducible as a sum of a two-dimensional representation and a one-dimensional representation.  To see this, we let $\sigma:G_\Q\to \gl_2(\bar\F_5)$ be an ordinary Galois representation of Serre weight 2 \cite{Serre}, with determinant the cyclotomic character modulo 5 (denoted $\omega$).  According to the main conjecture of \cite{ADP}, there are two ways to obtain a three-dimensional Galois representation with trivial predicted weight from $\sigma$: we may take the direct sum $\theta=\omega^2\oplus\sigma$, or we may take the direct sum $\theta'=\omega\sigma\oplus 1$.  If we denote the trace of $\sigma(\frob_\ell)$ by $b(\ell)$, we see that $\det(I-\sigma(\frob_\ell)X)=1-b(\ell) X +\ell X^2$,
$$\det(I-\theta(\frob_\ell))=(1-b(\ell) X+\ell X^2)(1-\ell^2 X)=1-(b(\ell)+\ell^2)X+(\ell+\ell^2b(\ell))X^2-\ell^3 X^3$$
and
$$\det(I-\theta'(\frob_\ell))=(1-\ell b(\ell) X+\ell^3 X^2)(1-X)=1-(\ell b(\ell)+1)X+(\ell^3+\ell b(\ell))X^2-\ell^3 X^3.$$

If we can find a single set of $b(\ell)$, so that some $a_i(\ell,1)=b(\ell)+\ell^2$, $a_i(\ell,2)=1+\ell b(\ell)$, $a_j(\ell,1)=\ell b(\ell)+1$, and $a_j(\ell,2)=\ell^2+b(\ell)$, then we can begin to search for a two-dimensional Galois representation with $\tr(\sigma(\frob_\ell))=b(\ell)$.

In fact, there are three possible sets of traces indicated in Table~\ref{2d}.
\begin{table}
\begin{tabular}{|c|c|c|c|c|c|c|c|c|c|c|c|c|c|c|c|}
\hline
$\ell$&2&3&5&7&11&13&17&19&23&29&31&37&41&43&47\cr
\hline
$b_1(\ell)$& 0 & 0 & * & 2 & 4 & 4 & 0 & 4 & 1 & 1 & 4 & 2 & 3 & 2 & 1 \cr
$b_2(\ell)$& 3 & 3 & * & 2 & 1 & 4 & 0 & 3 & 4 & 3 & 1 & 1 & 1 & 2 & 1 \cr
$b_3(\ell)$& 4 & 3 & * & 3 & 4 & 4 & 3 & 4 & 3 & 1 & 0 & 1 & 1 & 4 & 1 \cr
\hline
\end{tabular}
\caption{Traces of Frobenius under conjectured two-dimensional representations.}\label{2d}
\end{table}If we denote by $\sigma_i$ the (conjectured) two-dimensional Galois representation associated to the $b_i(\ell)$, and by $\theta_i$, $\theta'_i$ the two direct sums derived from $\sigma_i$, we see that $\rho_{2i-1}$ and $\theta_i$ have the same characteristic polynomials, and that $\rho_{2i}$ and $\theta'_i$ have the same characteristic polynomials.  Therefore, we will concentrate on finding the two-dimensional representations $\sigma_1$, $\sigma_2$, and $\sigma_3$.

\section{Number Fields from Galois representations}
A Galois representation $\rho:G_\Q\to \gl_n(\bar\F_p)$ is a continuous homomorphism from $G_\Q$ (the absolute Galois group of $\Q$, or the Galois group of $\bar \Q/\Q$) to a matrix group over the algebraic closure of a finite field.  Continuity here is with respect to the standard profinite (or Krull) topology on $G_\Q$, and the discrete topology on $\gl_n(\F_p)$.  The continuity of the homomorphism implies that it has finite image.  This in turn implies that its kernel is an open normal subgroup of finite index.  Then the fixed field of the kernel is a finite Galois extension $K/\Q$, and we can factor $\rho$ as a composition $\rho=\eta\circ\pi$, where $\pi:G_\Q\to\gal(K/\Q)$ is the canonical projection, and $\eta:\gal(K/\Q)\to\gl_n(\F_p)$ is a representation of the finite group $\gal(K/\Q)$. We call $K$ the number field cut out by $\rho$.  Note that given the number field $K$ and a homomorphism $\eta:\gal(K/\Q)\to\gl_n(\bar\F_p)$, we immediately obtain a Galois representation $\rho$.  Note that when we speak of $\rho(\frob_\ell)$, with $\frob_\ell$ in $G_\Q$, we may actually work with $\eta(\frob_\ell)$ with $\frob_\ell\in\gal(K/\Q)$, since the canonical projection take Frobenius elements to Frobenius elements.  Note also that $\eta$ is injective, so that to find the order of $\rho(\frob_\ell)$, it suffices to find the order of a Frobenius at $\ell$ in $\gal(K/\Q)$.  Our goal in this paper is to find polynomials with splitting fields $K/\Q$, such that the order of Frobenius at each prime $\ell$ matches the predicted order of Frobenius from the Hecke eigenvalues described above. Given the Hecke eigenvalues, we know the trace and determinant of $\sigma_i(\frob_\ell)$, and from this can easily compute the characteristic polynomial and eigenvalues.  If $\sigma_i(\frob_\ell)$ has two distinct eigenvalues, we know that $\sigma_i(\frob_\ell)$ is diagonalizable, and its order is the least common multiple of the orders of the eigenvalues.  If $\sigma_i(\frob_\ell)$ has a repeated eigenvalue, then $\sigma_i(\frob_\ell)$ is either diagonalizable (in which case it has the same order as the eigenvalue) or has an upper triangular Jordan canonical form (in which case it has order 5 times the order of the eigenvalue).  Note that knowing only the characteristic polynomial does not allow us to distinguish these latter two possibilities.

Using this technique, we obtain the orders of Frobenius listed in Table~\ref{frob}, where $n_i(\ell)$ denotes the order of the image of a Frobenius at $\ell$ under $\sigma_i$.
\begin{table}
\begin{tabular}{|c|c|c|c|c|c|c|c|c|c|c|c|c|c|c|c|}
\hline
$\ell$&2&3&5&7&11&13&17&19&23&29&31&37&41&43&47\cr
\hline
$n_1(\ell)$&24&24&*&4&3&4&4&4,20&24&4,20&4&24&   6& 4&24\cr
$n_2(\ell)$& 8& 8&*&4&3&4&8&4,20& 4&4,20&3& 4&2,10&24&24\cr
$n_3(\ell)$& 4&24&*&4&6&4&8&12  & 4&  12&6&24&   6&24&24\cr
\hline
\end{tabular}\caption{Orders of $\sigma_i(\frob_\ell)$}\label{frob}
\end{table}
Note that the systems of eigenvalues $a_i(\ell)$ all take values in $\F_5$ (and continue to do so for arbitrarily large $\ell$, since they arise from a one-dimensional eigenspace).  This implies (by \cite[Lemma 6.13]{DS}) that the representations $\sigma_i$ (if they exist) may also be defined over $\F_5$.  Hence, the image of $\sigma_i$ is isomorphic to a subgroup of $\gl_2(\F_5)$.  Based on \cite[Conjecture 3.1]{ADP}, we deduce that the Serre level of $\sigma_i$ is 163 and the nebentype of $\sigma_i$ is trivial.  These facts easily imply that the image of inertia at 163 under $\sigma_i$ has order 5. Combining this with the fact that we predict that the order of some Frobenius under $\sigma_i$ is 24, we see immediately that the $\sigma_i$ are surjective onto $\gl_2(\F_5)$ (since Magma \cite{Magma} indicates that no proper subgroup of $\gl_2(\F_5)$ of order divisible by $5$ has an element of order 24).  Now there is an exact sequence
$$0\to \F_5^\times\to\gl_2(\F_5)\to S_5\to 0,$$
where the injection takes $a\in\F_5^\times$ to $aI$, and we use the well known fact that $\pgl_2(\F_5)\cong S_5$.  Hence the fixed field of $\sigma_i$ contains an $S_5$-extension of $\Q$, ramified only at $5$ and at 163 (with $e=5$).  We now make the additional assumptions that $\det(\sigma_i)=\omega$, and that $\sigma_i$ is ordinary, wildly ramified, and finite at 5.  Then the field cut out by $\sigma_i$ contains an $S_5$-extension in which $5$ has ramification index 20 (and is \peu in the sense of Serre), and $163$ has ramification index 5.  By \cite[Theorem 4.2]{wild} and \cite{Note}, we see that the $S_5$-extension must be the Galois closure of a quintic field with discriminant $5^5163^4$, in which both $5$ and $163$ have ramification index 5.

Under the assumption that $\sigma_i$ exists, and has the properties described above, we will show that there is a unique $\gl_2(\F_5)$-extension of $\Q$ with the correct orders of Frobenius to be cut out by $\sigma_i$.  In the conclusion, we will explain why these conditions on $\sigma_i$ are reasonable.

\section{Targeted Hunter searches}
A Hunter search is designed to find a polynomial defining a number
field $K$ of degree $n$ and discriminant $D$, if such a number field
exists.  This search is based on the use of Hunter's theorem and
relations between coefficients of polynomials that bound the
coefficients of the defining polynomial in question.  However, the
search space produced by this method is generally far too large to
examine.  Thus we refine the Hunter search by using the desired
ramification type for a certain prime $p$ in $K$.  By knowing the
desired ramification of $p$ in $K$, we could determine congruence
conditions of the defining polynomial of $K$.  This allowed us to
decrease the search space to such an extent as to make our search
feasible in a reasonable amount of time \cite{Doud-Moore}.

We also limited our search space using congruence relations based
upon the desired ramification of certain primes \cite{Doud-Moore}.
We wanted our quintic field extension to only ramify at 5 and 163, each with
$e=5$. By \cite[Theorem 2]{Doud-Moore}, we see that a defining polynomial for the field must be congruent to a polynomial $(x-a)^5$ modulo 5 and modulo 163.  This restriction on the polynomial places congruences on the coefficients, reducing the size of our search space greatly.  For more information on determining the coefficient bounds and the congruence conditions, see \cite{Doud-Moore}.

Using these techniques, we programmed a search for a quintic polynomial
using GP/Pari\cite{Pari}. We ran this search (which took about 36 hours) and obtained a list of
polynomials satisfying the desired condition.  We then tested the polynomials to make sure that they
had the desired discriminant of $5^5163^4$, eliminated polynomials defining isomorphic number fields,
and were left with three fields, defined by the following three quintic
polynomials:
\begin{align*}
g_1&=x^5+13040x^2-117360x+307744\cr
g_2&=x^5+10595x^2-104320x+254932\cr
g_3&=x^5+13040x^2-104320x-8319520.
\end{align*}
Each of these polynomials defines an  $S_5$-extension, and the order of Frobenius at $\ell$ in the splitting field of each $g_i$ divides the predicted order of Frobenius under $\sigma_i$ (with a quotient dividing 4, as expected).

We will find it useful to describe the splitting field of each of these quintic polynomials as the splitting field of a sextic polynomial.  To do this, we use a resolvent calculation, described in \cite[Algorithm 6.3.10(1)]{Cohen}.  This resolvent calculation yields a degree six polynomial with the same splitting field as the quintic.  When we perform this calculation on our quintic polynomials we obtain the following sextic polynomials:
\begin{align*}
f_1&=x^6-3x^5+5x^4+60x^3-95x^2-118x-91,\cr
f_2&=x^6-2x^5-25x^4+190x^3-315x^2-336x+1712,\cr
f_3&=x^6-3x^5-25x^4+5185x^3-62915x^2+585072x+213824.
\end{align*}

\section{Structure of $\gl_2(\F_5)$}
Using Magma \cite{Magma}, we note that $G=\gl_2(\F_5)$ has a single conjugacy class of subgroups of order $80$.  It is simple to see that a representative subgroup of this conjugacy class is of the form
$$H=\left\{\left(\begin{matrix}*&*\cr0&*\end{matrix}\right)\right\}.$$
We note that $H$ has a normal subgroup $J$ of order 20, where
$$J=\left\{\left(\begin{matrix}*&*\cr0&1\end{matrix}\right)\right\},$$
and we note that every subgroup of $G$ of order 20 is conjugate to this one.
Our conditions on $\sigma_i$ force the image of inertia at 5 under $\sigma_i$ to be a subgroup conjugate to $J$.  Hence, with a proper choice of basis, we may choose this image to equal $J$.  Let $K/\Q$ be the extension cut out by $\sigma_i$.  Then $K^J/\Q$ has degree 24 and is the inertia field of a prime lying over $5$.  Further, $K^J/K^H$ is Galois, with Galois group $H/J\cong\Z/(4\Z)$.  One checks easily that $J$ has no subgroups which are normal in $G$, so that the Galois closure of $K^J$ is equal to $K$.  Hence, we may describe $K$ by finding a degree 24 polynomial defining the inertia field of a prime above 5.  We already have a degree 6 polynomial defining a field contained in this inertia field, namely the $f_i$.  We will use class field theory to study the inertia field and to compute a degree 24 defining polynomial for it.

\section{Class field theory}

Denote $K^H$ by $F$ and $K^J$ by $L$.  Then $L/F$ is a cyclic extension of degree 4.  Since $5$ and $163$ are the only primes in $K/\Q$ that ramify, they must be the only primes that ramify in $L/F$.  However, for 163, $e=5$, so no primes over 163 can ramify in a degree four subextension.

We note that $F$ has two primes lying over $5$, one of ramification index 1 and one of ramification index 5.   We will write these primes as $\p_1$ and $\p_2$, with $\p_1$ having ramification index 1.  By our choice of $L$ as the inertia field of a prime lying over $5$, we see immediately that $\p_1$ cannot ramify in $L/F$.  Hence, the only finite prime of $F$ which ramifies in $L$ is $\p_2$.  Since the ramification is tame, we see then that $F$ lies inside the ray class field of $L$ modulo $\p_2\m_\infty$ (where $\m_\infty$ denotes the product of the infinite primes of $F$, so that we are allowing $\p_2$ and any infinite primes to ramify).  Note also that the narrow class number of $F$ (for $f_1$ and $f_2$) is 1, so that $L/F$ must be totally ramified.

Using GP/PARI to compute the ray class group of $F$ modulo $\p_2\m_\infty$, we find that for $f_1$ and $f_3$, it is cyclic of  order $4$.  Hence, $L/F$ is in fact the ray class field.  For $f_2$, the desired ray class group is $\Z/12\Z\times\Z/2\Z$, and the problem is more complicated, since this group has several quotients which are cyclic of order 4.  We use a different technique to find the defining polynomial for $\sigma_2$.  However, at this point we may examine the cyclic degree 4 extensions of $F$ unramified outside $\p_2$.  There are two such extensions (since $\Z/12\Z\times\Z/2\Z$ has two quotients isomorphic to $\Z/4\Z$), but only one of them yields the correct orders of Frobenius to correspond to the Hecke eigenvalues.  We have then proved the following theorem.
\begin{theorem} Assume that the $\sigma_i$ exist, have level 163 and determinant $\omega$, and are ordinary, wildly ramified and finite at 5.  Then there is at most one candidate $\gl_2(\F_5)$-extension for each $\sigma_i$.
\end{theorem}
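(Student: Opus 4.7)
The plan is to exploit the tower $\Q\subset F=K^H\subset L=K^J\subset K$ (with $K$ the field cut out by $\sigma_i$) and prove uniqueness at each level in turn. Because $J$ has no nontrivial subgroup that is normal in $\gl_2(\F_5)$, $K$ is the Galois closure of $L$ over $\Q$, so it is enough to show that $L$ is uniquely determined; and because $L/F$ is cyclic of degree $4$, it is enough to show that $F$ is uniquely determined and then pin down a single cyclic quartic extension of $F$ ramified only at a specified prime above $5$.

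For the first step I would use the material already assembled in the paper. The hypotheses on $\sigma_i$ force the image to be all of $\gl_2(\F_5)$ with a \peu wildly ramified $S_5$-piece whose associated quintic has discriminant $5^5\cdot 163^4$; the Hunter search has produced exactly three such quintics, equivalently the three sextics $f_1,f_2,f_3$ defining the three candidates for $F$. The splitting field of each $f_i$ is an $S_5$-extension, and its Frobenius orders must divide the orders $n_i(\ell)$ from Table~\ref{frob} with a quotient dividing $4$ (coming from the kernel $\F_5^\times$ of $\gl_2(\F_5)\to\pgl_2(\F_5)$). Comparing these orders at enough primes $\ell<50$ matches each $\sigma_i$ with at most one $f_i$, so $F$ is pinned down.

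For the second step I would apply class field theory over $F$. The extension $L/F$ is cyclic of degree $4$ and can ramify only at primes above $5$ and $163$; the image of inertia at $163$ under $\sigma_i$ has order $5$, which is coprime to $[L:F]=4$, so no prime above $163$ ramifies. Among the two primes of $F$ above $5$, the choice of $L$ as the inertia field of a fixed prime of $K$ above $5$ forces $\p_1$ (the unramified prime in $F/\Q$) to be unramified in $L/F$. Hence $L$ is contained in the ray class field of $F$ modulo $\p_2\m_\infty$. For the $F$ arising from $f_1$ or $f_3$, a direct GP/PARI calculation gives a cyclic ray class group of order $4$, so $L$ must equal the whole ray class field — uniqueness is immediate. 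For the $F$ arising from $f_2$, the ray class group is $\Z/12\Z\times\Z/2\Z$, which has exactly two surjections onto $\Z/4\Z$ up to automorphism, hence exactly two cyclic quartic subfields. One then computes Frobenius orders at small primes in each of the two candidate extensions and checks that only one is consistent with the list $n_2(\ell)$ in Table~\ref{frob}.

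The main obstacle is precisely this last verification for $f_2$: both the ray class group computation and the Frobenius comparison distinguishing the two quartic quotients are genuine calculations, and the argument only succeeds if the Hecke predictions at $\ell<50$ are sharp enough to kill the spurious candidate. Granting this check, $L$ is unique in all three cases, $K$ is its Galois closure over $\Q$, and the theorem follows. Everything else is a clean assembly of the subgroup structure of $\gl_2(\F_5)$ developed in Section~6 with the elementary class field theory of cyclic $\ell$-extensions with prescribed ramification.
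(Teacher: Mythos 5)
Your proposal is correct and follows essentially the same route as the paper: reduce to uniqueness of the degree-24 inertia field $L=K^J$ via the subgroup structure of $\gl_2(\F_5)$ (the core of $J$ being trivial, $K$ is the Galois closure of $L$), pin down $F$ by the Hunter search together with the Frobenius-order comparison, and then use the ray class group of $F$ modulo $\p_2\m_\infty$ (cyclic of order $4$ for $f_1,f_3$; $\Z/12\Z\times\Z/2\Z$ with exactly two cyclic quartic quotients, one eliminated by the Frobenius data, for $f_2$). The only difference is presentational: the paper's displayed proof is a three-sentence summary of exactly the material you spell out from its Sections 4--7.
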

\begin{proof} We have seen that such an extension must contain an $S_5$-extension with certain ramification at 5 and 163.  A Hunter search shows that there is exactly one such $S_5$-extension for each $\sigma_i$. Class field theory then shows that each $S_5$-extension is contained in at most one $\gl_2(\F_5)$-extension with the correct ramification and orders of Frobenius to correspond to $\sigma_i$.
\end{proof}

\section{Kummer Theory}

We require the following theorem:
\begin{theorem}
\cite[pg 114]{cox}\label{kummer}
Let $L=K(\sqrt{u})$ be a quadratic extension with $u\in
\OO_K$, and let $\mathfrak{p}$ be prime in $\OO_K$.

(i) If $2u \notin \mathfrak{p}$, then $\mathfrak{p}$ is unramified
in $L$.

(ii) If $2\in \mathfrak{p}$, $u\notin \mathfrak{p}$ and $u=b^2-4c$
for some $b,c\in \OO_K$, then $\mathfrak{p}$ is unramified
in $L$.  \end{theorem}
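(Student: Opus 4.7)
The plan is to reduce both cases to the standard fact that a prime $\p\subset\OO_K$ is unramified in a finite separable extension $L/K$ if and only if $\p$ does not divide the relative discriminant $\operatorname{disc}(L/K)$. Recall that for any $\alpha\in\OO_L$ with $K(\alpha)=L$ and minimal polynomial $f\in\OO_K[x]$ one has the transitivity relation $(\operatorname{disc}(f))=[\OO_L:\OO_K[\alpha]]^2\cdot\operatorname{disc}(L/K)$, so $\operatorname{disc}(L/K)$ divides $(\operatorname{disc}(f))$ as ideals. Thus in each part it suffices to exhibit some integral generator of $L$ over $K$ whose minimal polynomial has discriminant lying outside of $\p$.

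For part (i), the obvious choice works: take $\alpha=\sqrt{u}$, which is integral since $u\in\OO_K$, and whose minimal polynomial $x^2-u$ has discriminant $4u$. The hypothesis $2u\notin\p$ forces both $2\notin\p$ and $u\notin\p$ (as $\p$ is prime), hence $4u\notin\p$, and the conclusion follows immediately.

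For part (ii) this naive generator fails because $2\in\p$ now drags $4u$ into $\p$. Instead, imitating the classical passage from $\Z[\sqrt{d}]$ to $\Z[(1+\sqrt{d})/2]$ when $d\equiv 1\pmod 4$, set $\beta=(b+\sqrt{u})/2\in L$. A direct computation using $u=b^2-4c$ gives $\beta^2-b\beta+c=0$, so $\beta$ satisfies the monic polynomial $g(x)=x^2-bx+c$ over $\OO_K$ and therefore lies in $\OO_L$. Since $\sqrt{u}=2\beta-b\in K(\beta)$, we also have $K(\beta)=L$. The discriminant of $g$ equals $b^2-4c=u$, which by hypothesis is not in $\p$, so $\operatorname{disc}(L/K)$ is coprime to $\p$ and we are done.

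The only nonroutine step is recognizing the need to replace $\sqrt{u}$ by $(b+\sqrt{u})/2$ in part (ii); once this substitution is in hand, both parts collapse to a single application of the discriminant criterion, and there is no genuine obstacle beyond carrying out the small algebraic identity $\beta^2-b\beta+c=0$.
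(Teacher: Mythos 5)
Your proof is correct: the reduction to the relative discriminant criterion, the observation that $2u\notin\p$ forces $4u\notin\p$, and the replacement of $\sqrt{u}$ by $\beta=(b+\sqrt{u})/2$ with minimal polynomial $x^2-bx+c$ of discriminant $u$ are all sound. The paper gives no proof of this statement at all---it is quoted verbatim from Cox with only a citation---and your argument is essentially the standard one found in that reference, so there is no divergence to report.
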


If we denote by $M$ the unique quadratic subextension of $L/F$, then we see that $M=L(\sqrt u)$ for some $u\in\OO_L$, and that $M/L$ is ramified only at $\p_2$.  Hence, $u$ must be an element of $\p_2$, but of no other prime ideal (by \ref{kummer}).  Since the class number of $F$ is 1, we see that $\p_2=(\alpha)$ is principal.  Hence, an element of $\OO_K$ which is contained in $\p_2$ but in no other prime ideal is of the form $\alpha^m\eta$, where $\eta$ is a unit in $\OO_K$. Adjusting this element by a square factor will not affect $m$, so we may take $u=\pm\alpha\eta$, where $\eta$ is a product of some subset of the fundamental units of $F$.  Note that there are only finitely many such elements.  For each such element, we compute a minimal polynomial for $u$, and substitute $x^2$ for $x$ to obtain a minimal polynomial for $\sqrt u$.  We then check to see if the resulting polynomial yields a field which is unramified at 2.  If it does, we check this polynomial to see if the orders of Frobenius in the resulting extension match those desired for $\sigma_i$.

For $f_1$ and $f_3$, we obtain unique polynomials from this process, each of which yields the correct orders of Frobenius. Hence, there is a unique quadratic extension of $L$ unramified outside $\p_2$.  This extension is the desired $M$.

Finally, we repeat this process to find a quadratic extension of $M$ ramified only at the unique prime of $M$ lying over $\p_2$.  Fortunately, each $M$ encountered has class number 1, so that the above procedure can be repeated.  We then obtain 2 degree 24 polynomials, one for each $\sigma_i$, as indicated in Table~\ref{deg24}.
\begin{table}
\begin{tabular}{|c|c|}
\hline
Representation&Defining Polynomial\cr
\hline
$\sigma_1$&$x^{24} - 637659x^{22} + 109056774377x^{20}$\cr&$ + 1632464535273540x^{18}$\cr
&$ - 371651092248574570x^{16} - 2672604891733833170x^{14} $\cr
&$- 5269788031324753370x^{12} + 155802031802967990x^{10}$\cr
&$ - 5228343306748595x^8 + 117734861534580x^6 $\cr
&$+ 817310749930x^4 - 3278260x^2 + 5$\cr
\hline
$\sigma_3$&$x^{24} - 368662181x^{22} - 56979878945733576x^{20}$\cr
&$-5926739223447260329773770x^{18}$\cr
&$+ 2825061262048524412523252201750x^{16}$\cr
&$  - 492299269650821506732949613908905505x^{14}$\cr
&$ - 1307345148590597879883355731944130x^{12}$\cr
&$ - 25233659029929802674589647281025x^{10}$\cr
&$ - 21690370211750470529946989210x^8 $\cr
&$- 60001189294636879166328970x^6  $\cr
&$+42796030038120191739040x^4 $\cr
&$+ 930599517955x^2 + 5$\cr
\hline\end{tabular}
\caption{Defining polynomials for $\sigma_1$ and $\sigma_3$.}\label{deg24}
\end{table}
Note that the class number of $F$ for $f_2$ is 6, so that we cannot find a generator for the ideal $\p_2$, and this process does not work.  We could use a more complicated technique of explicit class field theory, but we choose instead to use an entirely different method to find a defining polynomial.

\section{Elliptic Curves}
Let $E$ be an elliptic curve defined over $\Q$. Let $E(\bar\Q)_p=\{P_1,P_2,\ldots,
P_k\}$ be the $p$-torsion points on $E(\bar\Q)$. Then $E(\bar\Q)_p$ is an $\F_p$-vector space, on which $G_\Q$ acts as linear transformations.  This gives rise to a Galois representation $\tau:G_\Q\to\gl_2(\F_p)$.  If the conductor of $E$ is $N$, then $\tau$ is unramified outside $pN$.

Define
$P_i=(x_i,y_i)$ and
$$K=\mathbb{Q}(x_1,\ldots,x_k,y_1,\ldots,y_k)$$  Then $K$ is stable
under the action of $G_\mathbb{Q}$, so $K/\mathbb{Q}$ is a Galois
extension.  We note that $K$ is exactly the field cut out by $\tau$.  This implies that $\gal(K/\Q)$ is a Galois extension of $\Q$, with Galois group a subgroup of $\gl_2(\F_p)$.

We now examine the elliptic curve of conductor 163 given by the equation $$y^2+y=x^3-2x+1$$ (obtained from \cite{Cremona}).  The 5-torsion representation arising from this curve is ramified only at 5 and 163.  From \cite[Prop 2.11(c)]{DDT}, we see that in the fixed field $K$ of $\tau$, the prime $5$ has ramification index 20. From \cite[Theorem VII.3.4]{Silverman} we then see that for each torsion point, $py_i$ is an algebraic integer, and that the Galois conjugates of the $py_i$ are all of the form $py_j$. Using GP/PARI, we then compute the degree
24 polynomial
$$\prod_{y_i}(x-py_i)$$
to high enough precision to recognize the coefficients as integers, and round off to obtain the polynomial $f(x)$ in Table~\ref{ell24}, which has splitting field contained in the fixed field $K$ of $\tau$.
\begin{table}
\begin{center}
\begin{tabular}{|c|}
\hline
$x^{24} + 60x^{23} + 8475x^{22} + 402875x^{21} $\cr
$+ 13913355x^{20} + 354220875x^{19} + 8309320000x^{18}$\cr
$ + 169517221875x^{17} + 2491765593750x^{16}$\cr
$ + 24464219093750x^{15} + 179155477734375x^{14}$\cr
$ + 1413835025390625x^{13} + 14279768203125000x^{12}$\cr
$ + 132599856298828125x^{11} + 849863511181640625x^{10}$\cr
$ + 2785809996337890625x^9 - 3487057855224609375x^8 $\cr
$- 82465359191894531250x^7 - 407675512695312500000x^6 $\cr
$- 1095723202056884765625x^5 - 1700718978881835937500x^4$\cr
$ - 1327285671234130859375x^3 - 156774902343750000000x^2 +$\cr
$ 359581947326660156250x + 615432262420654296875$\cr
\hline
\end{tabular}\caption{Degree 24 polynomial defining $\sigma_2$}\label{ell24}
\end{center}
\end{table}
This polynomial is irreducible, and we easily compute that in the field defined by $f$, 19 has inertial degree 20 and 47 has inertial degree 24.  Hence, the Galois group of $f$ is a subgroup of $\gl_2(\F_5)$ containing elements of orders 20 and 24, so it must be all of $\gl_2(\F_5)$.  Thus, the splitting field of $f$ is exactly the field $K$ cut out by $\tau$.

By computing the splitting of primes in $K$ we were able to determine
that this polynomial defines a $\gl_2(\F_5)$-extension of $\Q$ which has the correct orders of Frobenius (for $\ell<50$) to correspond to be the field cut out by $\sigma_2$.

Note that we can explicitly compute the trace of $\tau(\frob_\ell)$ where $\tau$ is the $5$-torsion representation of an elliptic curve $E$.  This trace is the reduction mod $p$ of $a_\ell=p+1-|E(\F_\ell)|$.  When we do this, the numbers $a_\ell$ that we obtain are identical to the traces of $\sigma_2$, as desired.

\section{Conclusion}

Under the assumption that each $\sigma_i$ exists, has level 163 and determinant $\omega$, is ordinary, wildly ramified and finite at 5, we have shown that there is a single candidate for the field cut out by $\sigma_i$, and determined this field.  Note that these assumptions are not unreasonable.  The assumptions on the level and determinant are natural to make, based on \cite[Conjecture 3.1]{ADP}.

If we assume that $\sigma_i$ is ordinary and tamely ramified, then the ramification index at 5 would be $4$, and $\sigma_i$ would cut out a field containing an $S_5$-extension in which the ramification index at 5 is 4.  If we assume that $\sigma_i$ is supersingular (and hence tamely ramified), then $\sigma_i$ would cut out a field containing an $S_5$-extension in which the ramification index at 5 is 6.  In either case, the $S_5$-extension would be the Galois closure of a cubic field of discriminant $5^3163^4$.  A Hunter search shows that no such field exists.

Finally, if we assume that $\sigma_i$ is ordinary, wildly ramified at 5, and not finite, the Serre weight of $\sigma_i$ would be $5$ rather than $2$, and we would not expect $\omega^2\oplus\sigma_i$ to be attached to an eigenclass with trivial coefficients. The field cut out by $\sigma_i$ in this case would contain the Galois closure of a quintic field of discriminant $5^9163^4$.  A Hunter search for such a field could show that it does not exist, but would involve searching too many polynomials to be feasible.

\bibliographystyle{plain}
\bibliography{references}

\end{document}